\documentclass[12pt]{amsart}

\usepackage{latexsym, amsmath, amscd, amssymb, amsthm} 
\usepackage[T2A]{fontenc}
 \usepackage[cp1251]{inputenc}
\textwidth=17cm
\textheight=23.7cm
\voffset=-0.5 cm
\hoffset=-1.4cm
\newtheorem{te}{Theorem}
\newtheorem{lm}{Lemma}

\begin{document}

\noindent

 \title{ Analogue  of Sylvester-Cayley formula  for invariants of $n$-ary form }

\author{Leonid Bedratyuk} \address{ Khmel'nyts'ky National University, Instytuts'ka st. 11, Khmel'nyts'ky , 29016, Ukraine}
\email {bedratyuk@ief.tup.km.ua}
\begin{abstract}
The number $\nu_{n,d}(k)$ of linearly independed homogeneous invariants of  degree $k$  for the $n$-ary form of degree $d$  is calculated. The following formula   holds
$$
\nu_{n,d}(k)=\sum_{s \in W} (-1)^{|s|} c_{n,d}\bigl(k,(\rho-s(\rho))^*\bigr),
$$
here $W$  is Weyl group of Lie algebra $\mathfrak{sl_{n}},$ $(-1)^{|s|}$ is the sign   of   the element $s \in W,$  ${\rho=(1,1,\ldots,1)} $  is  half the sum of the positive roots of $\mathfrak{sl_{n}},$   the  weight  $\lambda^*$  means the  unique dominant weight on  the orbit $W(\lambda)$  and   $c_{n,d}\bigl(k,(m_1,m_2,\ldots,m_{n-1})\bigr)$  is 
the number  of nonnegative integer solutions of the system of equations 
$$
\left \{
\begin{array}{l}
2\, \omega_1(\alpha)+\omega_2(\alpha)+\cdots +\omega_{n-1}(\alpha)=d\,k-m_1, \\
\omega_1(\alpha)-\omega_2(\alpha)=m_2, \\
\ldots \\
\omega_{n-2}(\alpha)-\omega_{n-1}(\alpha)=m_{n-1},\\
|\alpha|=k.
\end{array}
\right.
$$
Here $\omega_r(\alpha)=\sum_{i \in I_{n,d}} i_r \alpha_i, $  $I_{n,d}:=\{i=(i_1,i_2,\ldots, i_{n-1})  \in \mathbb{Z}_{+}^{n-1},   |i|\leq d \},$ $ |i|=i_1+\cdots+ i_{n-1}.$
\end{abstract}
\maketitle

\noindent

\noindent
{\bf 1.}
Let   be  $F_{d,n}$  the $\mathbb{C}$-space of  $n$-ary forms  of degree $d:$
$$ 
 \sum_{i \in I_{n,d}} \,a_{i} { d \choose i} \, x_1^{d-(i_1+\cdots i_{n-1})} x_2^{i_1} \cdots x_n^{i_{n-1}},
$$

where  $I_{n,d}:=\{i=(i_1,i_2,\ldots, i_{n-1})  \in \mathbb{Z}_{+}^{n-1},   |i|\leq d \},$ $ |i|=i_1+\cdots+ i_{n-1},$ $a_{i} \in \mathbb{C}$  and 

$${ d \choose i}:=\frac{d!}{i_1!\, i_2! \cdots i_{n-1}! (d-(i_1+\cdots i_{n-1}))!}.$$ 
Let us identify the algebra of polynomial function $\mathbb{C}[F_{d,n}]$  with the polynomial $\mathbb{C}$-algebra $A_{d,n}$  of the variables set  $\{a_{i}, i \in I_{n,d} \}.$ The natural action of the group $SL_n$ on  $F_{d,n}$  induces    the actions  of   $SL_n$ ( and  $\mathfrak{sl_{n}}$)   on the algebra $A_{d,n}.$ The corresponding ring of  invariants   $A_{d,n}^{SL_n}=A_{d,n}^{\mathfrak{sl_{n}}}$  is  called the ring of invariants for the $n$-ary form of degree  $d.$

The ring  $A_{n,d}^{\mathfrak{sl_{n}}}$  is  graded ring 
$$
A_{d,n}^{\mathfrak{sl_{n}}}=(A_{d,n}^{\mathfrak{sl_{n}}})_0+(A_{d,n}^{\mathfrak{sl_{n}}})_1+\cdots+(A_{d,n}^{\mathfrak{sl_{n}}})_k+ \cdots,
$$
here  $(A_{d,n}^{\mathfrak{sl_{n}}})_k$ is the vector subspace generated  by   homogeneous  invariants of degree $k.$

Denote  $\nu_{n,d}(k) :=\dim (A_{d,n}^{SL_n})_k.$  For the binary form the number   $\nu_{2,d}(k)$  is  calculated  by well-known  Sylvester-Cayley formula, see  \cite{Hilb}.  For  the tenary form the  number  $\nu_{3,d}(k)$ is  calculated in the paper  of the present author, see  \cite{ASM}.  In this  paper we generalize  those formulas for the case  of $n$-ary form. 

{\bf 2.} In the Lie algebra    $\mathfrak{sl_{n}}$
denote  by  $E_{i\,j}$ the matrix unities.  The matrices  $H_1:=E_{2,\,2}{-}E_{1,\, 1},$ $H_2:=E_{3,\,3}{-}E_{2,\,2},$ $\ldots$ $H_{n-1}:=E_{n-1,\,n-1}{-}E_{n-2,\,n-2}$  generate the Cartan  subalgebra in   $\mathfrak{sl_{n}}.$ 

Recall that  the ordered set of   integer numbers  
$\lambda=(\lambda_1,\lambda_2,\ldots,\lambda_{n-1})$ is  called the weight of  $\mathfrak{sl_{n}}$-module $V,$  if there exists  $v \in V$ such that $v$ is common eigenvector of the operators $H_s$ and  $H_s(v)=\lambda_s \,v, $ $s=1,\ldots, n-1.$  A weight  is said to be  dominant weight if all  $\lambda_s \geq 0.$  Note,  our  definition of weight is  slightly different  than the standard  weight definition  as function on the Cartan subalgebra. Denote by $\Lambda_{V}$  the  set of all weight of $\mathfrak{sl_{n}}$-module  $V$  and denote by $\Lambda^{+}_{V}$  the set of dominant weight of $V.$ Also, denote by $\Gamma_{\lambda}$ the unique irreducible  $\mathfrak{sl_{n}}$-module  with highest  weight $\lambda.$

Let   $ A$ be the vector subspace of   $A_{n,d}$   generated by all elements $\{ a_{i}, i \in I_{d,n}\}$ of degree 1. 
The operators   $H_i,$ $i=1\ldots n-1$  act  on the basis elements   $a_i,$ $i \in I_{n,d}$ of the space $ A$ in  the  following  way, see   \cite{AA} :
$$
H_1(a_{i})=(d-(2\,i_1+i_2+\cdots +i_{n-1})) a_{i},  H_2(a_{i})=(i_2-i_3) a_{i}, \ldots H_{n-1}(a_{i})=(i_{n-2}-i_{n-1}) a_{i}.
$$

Therefore, the basis elements  $a_{i}, i \in I_{n,d}$ of   $\mathfrak{sl_{n}}$-module  $A$ are  the common eigenvectors  of the operators $H_s$  with the weights  
$$\varepsilon_{i}=\bigl(d-(2\,i_1+i_2+\cdots +i_{n-1}),i_2-i_3,\ldots,i_{n-2}-i_{n-1}\bigr).$$
 
It is clear that  $ A$   is  irreducible $\mathfrak{sl_{n}}$-module with  the highest  vector  $a_{(0,0,\ldots,0)}$   and with the highest  weight  $(d,0,\ldots,0).$  Thus,   $A \cong  \Gamma_{(d,0,\ldots,0)}.$ 
The number  $\nu_{n,d}(k)$  is the multiplicities  of trivial  $\mathfrak{sl_{n}}$-module  $\Gamma_{(0,0,\ldots,0)}$ in the decomposition the  symmetrical power   $S^k(A)$ on irreducible  $\mathfrak{sl_{n}}$-modules, i.e.  $\nu_{n,d}(k)=\gamma_{d,n}(k,(0,0,\ldots,0))$. Here  $\gamma_{d,n}(k,(0,0,\ldots,0))$  is determined from the decomposition:
$$
S^k(A)\cong \sum_{\lambda} \gamma_{n,d}(k,\lambda) \Gamma_{\lambda}, \lambda \in \Lambda^{+}_{S^k(A)}.
$$
Since the   multiplicities  of all  weights  are equal to 1 we  may write down the  the character of $A:$ 
$$
{\rm Char} (A)=\sum_{i \in I_{n,d}} e(\varepsilon_{i}).
$$
Here  $e(\varepsilon_{i})$ are generating elements of the group ring of the weight lattice  $\mathbb{Z}(\Lambda_{A}).$

We need the following technical lemma
\begin{lm}
The    monomial  $\prod_{i \in I_{n,d}} a_{i}^{\alpha_{i}}$ of  total degree $k$ is the weight vector of   $\mathfrak{sl_{n}}$-module  $S^k(A)$ with the  weight  $$(n\,d- (2 \omega_1(\alpha)+\omega_2(\alpha)+\cdots +\omega_{n-1}(\alpha)),   \omega_1(\alpha)-\omega_2(\alpha), \ldots, \omega_{n-2}(\alpha)-\omega_{n-1}(\alpha)),$$ де $\omega_s(\alpha)=\sum_{i \in I_{n,d}} i_s\, \alpha_{i},$     $|\alpha|:=\sum_{i \in I_{n,d}} \alpha_{i}=k.$
\end{lm}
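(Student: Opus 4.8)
The plan is to obtain the weight of the monomial by a direct eigenvalue computation, using only that $\mathfrak{sl}_n$ acts on the polynomial algebra $A_{n,d}$ — hence on its degree-$k$ homogeneous component, which is identified with $S^k(A)$ — by derivations. First I would record the action of the Cartan operators on the generators: as noted above, each $a_i$, $i\in I_{n,d}$, is a common eigenvector of $H_1,\ldots,H_{n-1}$, with $H_1(a_i)=\bigl(d-(2i_1+i_2+\cdots+i_{n-1})\bigr)a_i$ and $H_s(a_i)=(i_{s-1}-i_s)\,a_i$ for $s=2,\ldots,n-1$; that is, $H_s(a_i)=(\varepsilon_i)_s\,a_i$, where $(\varepsilon_i)_s$ is the $s$-th coordinate of $\varepsilon_i$.

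Then, fixing $\alpha=(\alpha_i)_{i\in I_{n,d}}$ with $|\alpha|=k$ and writing $M=\prod_{i\in I_{n,d}}a_i^{\alpha_i}$, I would apply the Leibniz rule to the derivation $H_s$:
$$
H_s(M)=\sum_{j\in I_{n,d}}\alpha_j\,a_j^{\alpha_j-1}\,H_s(a_j)\prod_{i\ne j}a_i^{\alpha_i}=\Bigl(\sum_{j\in I_{n,d}}\alpha_j\,(\varepsilon_j)_s\Bigr)M .
$$
Hence $M$ is again a common eigenvector of all the $H_s$, i.e. a weight vector of $S^k(A)$, with weight $\sum_{j\in I_{n,d}}\alpha_j\,\varepsilon_j$. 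It then remains to read off the coordinates of this sum using the definition $\omega_r(\alpha)=\sum_{i\in I_{n,d}}i_r\alpha_i$ together with the normalization $\sum_j\alpha_j=|\alpha|=k$: for the first coordinate,
$$
\sum_{j\in I_{n,d}}\alpha_j\bigl(d-(2j_1+j_2+\cdots+j_{n-1})\bigr)=d\,k-\bigl(2\omega_1(\alpha)+\omega_2(\alpha)+\cdots+\omega_{n-1}(\alpha)\bigr),
$$
while for $s=2,\ldots,n-1$ the $s$-th coordinate is $\sum_{j\in I_{n,d}}\alpha_j(j_{s-1}-j_s)=\omega_{s-1}(\alpha)-\omega_s(\alpha)$; this is exactly the asserted weight, and it is in accordance with the first equation of the system in the Theorem.

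There is no real obstacle here: the argument is a one-line application of the Leibniz rule followed by bookkeeping of indices. The only points deserving a word of care are that the $\mathfrak{sl}_n$-action on $A_{n,d}$ really is by derivations — so that the Leibniz rule applies to monomials of arbitrary total degree $k$, not merely to linear forms — and that $\sum_j\alpha_j=k$ is precisely what turns $d\sum_j\alpha_j$ into $d\,k$ in the first coordinate. I would also remark that the same computation incidentally shows that the monomials $\prod_{i\in I_{n,d}} a_i^{\alpha_i}$ with $|\alpha|=k$ form a basis of $S^k(A)$ consisting of weight vectors, which is what makes weight multiplicities in $S^k(A)$ computable by counting the integer solutions $\alpha$, as used in the sequel.
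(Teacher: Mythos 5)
Your proof is correct and is precisely the ``direct calculation'' that the paper leaves to the reader: since the $H_s$ act on $A_{n,d}$ by derivations, the Leibniz rule makes the monomial a common eigenvector with eigenvalue $\sum_j \alpha_j(\varepsilon_j)_s$, and the bookkeeping with $|\alpha|=k$ gives the stated coordinates. Note only that the first coordinate you obtain, $d\,k-\bigl(2\omega_1(\alpha)+\omega_2(\alpha)+\cdots+\omega_{n-1}(\alpha)\bigr)$, is the correct value (consistent with Lemma 2 and the Theorem), the $n\,d$ in the printed statement of the Lemma being a misprint, just as the text's $H_2(a_i)=(i_2-i_3)a_i$ is a misprint for the pattern $H_s(a_i)=(i_{s-1}-i_s)a_i$ that you rightly use.
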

\begin{proof}
Direct calculations.
\end{proof}

The  character of  $\mathfrak{sl_{n}}$-module  $S^k(A)$  is the complete symmetrical polynomial  $H_k$  of the variable set $e(\varepsilon_i)$, $ i \in  I_{n,d},$  see \cite{FH}. Therefore, we  have 
$$
{\rm Char} (S^k(A))=\sum_{|\alpha|=k} \prod_{i \in I_{n,d}}  e(\varepsilon_{i})^{\alpha_{i}}
$$
\begin{lm}
$$
{\rm Char} (S^k(A))=\sum_{\mu} c_{n,d}(k,\mu) e(\mu),  \mu \in \Lambda_{S^k(A)}
$$
here $c_{n,d}(k,\mu) :=c_{n,d}(k,(\mu_1,\mu_2,\ldots,\mu_{n-1}))$ is the number    of nonnegative integer solutions of the system of equations
$$
\left \{
\begin{array}{l}
2\, \omega_1(\alpha)+\omega_2(\alpha)+\cdots +\omega_{n-1}(\alpha)=k\, d-\mu_1, \\
\omega_1(\alpha)-\omega_2(\alpha)=\mu_2, \\
\ldots \\
\omega_{n-2}(\alpha)-\omega_{n-1}(\alpha)=\mu_{n-1},\\
|\alpha|=k.
\end{array}
\right.
$$
\end{lm}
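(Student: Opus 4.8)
The plan is to expand the character of $S^k(A)$ already obtained, namely ${\rm Char}(S^k(A))=\sum_{|\alpha|=k}\prod_{i\in I_{n,d}}e(\varepsilon_i)^{\alpha_i}$, inside the group ring $\mathbb{Z}(\Lambda_{A})$ and then to collect the resulting monomials by weight. First I would use the multiplicativity $e(\lambda)\,e(\mu)=e(\lambda+\mu)$ of the group ring to rewrite each summand as a single basis element $e\!\left(\sum_{i\in I_{n,d}}\alpha_i\varepsilon_i\right)$. By Lemma~1 the exponent $\sum_{i}\alpha_i\varepsilon_i$ is exactly the weight of the monomial $\prod_i a_i^{\alpha_i}$, so, writing $\omega_s(\alpha)=\sum_{i\in I_{n,d}}i_s\alpha_i$ and using $|\alpha|=k$,
$$
\mu(\alpha):=\bigl(kd-(2\omega_1(\alpha)+\omega_2(\alpha)+\cdots+\omega_{n-1}(\alpha)),\ \omega_1(\alpha)-\omega_2(\alpha),\ \ldots,\ \omega_{n-2}(\alpha)-\omega_{n-1}(\alpha)\bigr),
$$
and ${\rm Char}(S^k(A))=\sum_{|\alpha|=k}e(\mu(\alpha))$.

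Next I would group this sum according to the value of $\mu(\alpha)$. For a fixed weight $\mu=(\mu_1,\ldots,\mu_{n-1})$ the coefficient of $e(\mu)$ is the number of tuples $\alpha=(\alpha_i)_{i\in I_{n,d}}\in\mathbb{Z}_+^{|I_{n,d}|}$ satisfying $|\alpha|=k$ and $\mu(\alpha)=\mu$. Writing the vector equality $\mu(\alpha)=\mu$ out coordinate by coordinate reproduces verbatim the linear system in the statement: the first coordinate gives $2\omega_1(\alpha)+\cdots+\omega_{n-1}(\alpha)=kd-\mu_1$, the $r$-th coordinate (for $r=2,\ldots,n-1$) gives $\omega_{r-1}(\alpha)-\omega_r(\alpha)=\mu_r$, and the total-degree constraint is $|\alpha|=k$. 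Hence that coefficient is precisely $c_{n,d}(k,\mu)$, which gives ${\rm Char}(S^k(A))=\sum_{\mu}c_{n,d}(k,\mu)\,e(\mu)$.

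Finally I would note that whenever $c_{n,d}(k,\mu)\neq0$ there is a basis monomial $\prod_i a_i^{\alpha_i}\in S^k(A)$ of weight $\mu$, so $\mu\in\Lambda_{S^k(A)}$; thus only weights of $S^k(A)$ contribute, and each $c_{n,d}(k,\mu)$ is finite because $I_{n,d}$ is finite and $|\alpha|=k$ bounds every $\alpha_i$. I do not expect a genuine obstacle here: once Lemma~1 is available the proof is a formal computation in the group ring followed by a reindexing of the sum. The one place to be careful is keeping track of the coordinates when passing from $\prod_i e(\varepsilon_i)^{\alpha_i}$ to $e(\mu(\alpha))$ and back to the system defining $c_{n,d}(k,\mu)$, so that no index or sign in the equations is misplaced.
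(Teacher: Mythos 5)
Your proof is correct and is precisely the ``direct calculation using Lemma~1'' that the paper leaves unwritten: expand ${\rm Char}(S^k(A))$ in the group ring using $e(\lambda)e(\mu)=e(\lambda+\mu)$, identify the weight of each monomial via Lemma~1, and count the tuples $\alpha$ with $|\alpha|=k$ yielding a fixed weight $\mu$, which is exactly the system defining $c_{n,d}(k,\mu)$. (In doing so you also implicitly correct the misprint in Lemma~1, whose first coordinate should read $kd-(2\omega_1(\alpha)+\cdots+\omega_{n-1}(\alpha))$ rather than $nd-\cdots$ for a monomial of degree $k$, consistent with the statement of Lemma~2.)
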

\begin{proof}
Direct calculations,using Lemma 1.
\end{proof}

For arbitrary  $\mu \in \Lambda_{\Gamma_{\lambda}}$  denote by  $\mu^*$   the  unique dominant weight on  the orbit $W(\mu)$ of the  Weyl group $W.$ Such dominant weight exists and  unique, see.  \cite{Hum}.

On the   $\mathfrak{sl_{n}}$-module  $\Gamma_{\lambda}$  let us define  the value $E_{\lambda}$ in the following way
$$
E_{\lambda}=\sum_{s \in W} (-1)^{|s|} n_{\lambda}((\rho-s(\rho))^*).
$$
 Here  $\rho$ is  half the sum of the positive roots of Lie algebra $\mathfrak{sl_{n}},$ $n_{\lambda}(\mu)$  is  the   multiplicities of the weight $\mu$ in  $\Gamma_{\lambda}$  and   $|s|$  is the sign   of   the element $s \in W.$  Note, $n_{\lambda}(\mu)=0$  if $\mu \notin \Gamma_{\lambda}.$

The following lemma plays crucial  role in the calculation.

\begin{lm}
$$
E_{\lambda}=\left\{ 
\begin{array}{c}
1, \lambda=(0,\ldots, 0) ,\\
0,  \lambda \neq (0,\ldots, 0).
\end{array}
\right.
$$
\end{lm}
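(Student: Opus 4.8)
The key observation is that $E_\lambda$ is built from the multiplicity function $n_\lambda$, which is $W$-invariant, evaluated at the dominant representatives of the weights $\rho - s(\rho)$ for $s \in W$. The natural strategy is to recognize $E_\lambda$ as (up to sign) the coefficient of the trivial representation obtained by pairing $\mathrm{Char}(\Gamma_\lambda)$ against the Weyl denominator $\sum_{s \in W}(-1)^{|s|} e(s(\rho) - \rho)$, i.e. to interpret the sum via the Weyl character formula. Concretely, I would first rewrite $n_\lambda\bigl((\rho - s(\rho))^*\bigr) = n_\lambda(\rho - s(\rho))$ using the $W$-invariance of weight multiplicities (so the passage to the dominant representative $\mu \mapsto \mu^*$ is harmless here), and then observe that $\sum_{s\in W}(-1)^{|s|} n_\lambda(\rho - s(\rho))$ is exactly the multiplicity of the zero weight in the virtual module $\Gamma_\lambda \otimes \bigl(\sum_{s}(-1)^{|s|}\Gamma\text{-like term } e(s(\rho)-\rho)\bigr)$ — that is, the constant term (coefficient of $e(0)$) in the product $\mathrm{Char}(\Gamma_\lambda)\cdot \prod_{\alpha > 0}(1 - e(-\alpha))$, since $\sum_{s\in W}(-1)^{|s|}e(s(\rho)) = e(\rho)\prod_{\alpha>0}(1-e(-\alpha))$ and hence $\sum_{s}(-1)^{|s|}e(s(\rho)-\rho) = \prod_{\alpha>0}(1 - e(-\alpha))$.

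**Main step.** Once $E_\lambda$ is identified with the coefficient of $e(0)$ in $\mathrm{Char}(\Gamma_\lambda)\cdot\prod_{\alpha>0}(1-e(-\alpha))$, I would multiply through by $e(\rho)$: the coefficient of $e(0)$ in $\mathrm{Char}(\Gamma_\lambda)\cdot\prod_{\alpha>0}(1-e(-\alpha))$ equals the coefficient of $e(\rho)$ in $\mathrm{Char}(\Gamma_\lambda)\cdot e(\rho)\prod_{\alpha>0}(1-e(-\alpha)) = \mathrm{Char}(\Gamma_\lambda)\cdot\sum_{s\in W}(-1)^{|s|}e(s(\rho))$. Now I invoke the Weyl character formula: $\mathrm{Char}(\Gamma_\lambda)\cdot\sum_{s}(-1)^{|s|}e(s(\rho)) = \sum_{s}(-1)^{|s|}e(s(\lambda+\rho))$. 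So $E_\lambda$ is the coefficient of $e(\rho)$ in $\sum_{s\in W}(-1)^{|s|}e(s(\lambda+\rho))$. Because $\lambda$ is dominant, $\lambda + \rho$ is strictly dominant (regular), so the weights $s(\lambda+\rho)$, $s \in W$, are pairwise distinct, and exactly one of them equals $\rho$ — namely when $s(\lambda+\rho) = \rho$, which (again by regularity and the freeness of the $W$-action on regular weights) forces $\lambda + \rho = \rho$, i.e. $\lambda = 0$, with $s = \mathrm{id}$ contributing sign $+1$. If $\lambda \neq 0$ no term equals $e(\rho)$ and the coefficient is $0$. This yields $E_\lambda = 1$ for $\lambda = 0$ and $E_\lambda = 0$ otherwise.

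**The obstacle.** The delicate point is the very first reduction: justifying that $n_\lambda\bigl((\rho - s(\rho))^*\bigr)$ may be replaced by $n_\lambda(\rho - s(\rho))$ and that the resulting alternating sum genuinely equals the $e(0)$-coefficient of $\mathrm{Char}(\Gamma_\lambda)\cdot\prod_{\alpha>0}(1-e(-\alpha))$. This rests on two facts, both standard (\cite{Hum}): weight multiplicities are constant on $W$-orbits, so $n_\lambda(\mu^*) = n_\lambda(\mu)$ for every $\mu$; and $e(\rho)\prod_{\alpha>0}(1-e(-\alpha)) = \sum_{s\in W}(-1)^{|s|}e(s(\rho))$, the Weyl denominator identity, together with the convention $n_\lambda(\mu) = 0$ for $\mu\notin\Lambda_{\Gamma_\lambda}$ (explicitly adopted in the excerpt). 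Everything after that is a formal manipulation in the group ring $\mathbb{Z}(\Lambda)$ plus the Weyl character formula, with the regularity of $\lambda + \rho$ doing the final bookkeeping. I expect the write-up to be short; the only thing to be careful about is matching the paper's nonstandard coordinates for weights (with $\rho = (1,1,\ldots,1)$) to the root-system language in which the Weyl denominator identity is usually stated, but since $E_\lambda$ is defined intrinsically in terms of $n_\lambda$ and $W$, this is just a change of notation and does not affect the argument.
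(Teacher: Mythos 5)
Your argument is correct, and in substance it is the same mechanism the paper uses, but you reach it by a different route: the paper simply quotes the recurrence $\sum_{s\in W}(-1)^{|s|}n_{\lambda}\bigl(\mu+\rho-s(\rho)\bigr)=0$ from Naimark--Stern, evaluates it at $\mu=0$ for $\lambda\neq 0$ (the case $\lambda=0$ being treated separately as trivial), and invokes the $W$-invariance of weight multiplicities to pass from $\rho-s(\rho)$ to $(\rho-s(\rho))^*$ --- exactly the reduction you also make. What you do differently is prove, rather than cite, the key identity: after the same $W$-invariance step you identify $E_{\lambda}$ with the coefficient of $e(0)$ in ${\rm Char}(\Gamma_{\lambda})\cdot\prod_{\alpha>0}(1-e(-\alpha))$, convert it via the denominator identity and the Weyl character formula into the coefficient of $e(\rho)$ in $\sum_{s\in W}(-1)^{|s|}e\bigl(s(\lambda+\rho)\bigr)$, and finish with the regularity of $\lambda+\rho$ (so $s(\lambda+\rho)=\rho$ forces $s=\mathrm{id}$, $\lambda=0$). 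This buys a self-contained proof that handles $\lambda=0$ and $\lambda\neq 0$ uniformly and makes visible where the value $1$ at $\lambda=0$ comes from (the sign of the identity element), at the cost of being slightly longer than the paper's two-line appeal to the literature; your bookkeeping of the delicate points (the $\mu\mapsto\mu^*$ replacement, the convention $n_{\lambda}(\mu)=0$ off the weight set, and the nonstandard coordinates with $\rho=(1,\ldots,1)$) is accurate and complete.
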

\begin{proof}
For  $ \lambda=(0,\ldots, 0),$  there is nothing to prove -- the   multiplicities of trivial weight in the trivial representation is equal to 1.

Suppose  now  $\lambda \neq (0,\ldots,0).$  We  use the following  recurrence formula, see \cite{NS},  for the multiplicities $n_{\lambda}(\mu)$ of the weight $\mu$ in the $\mathfrak{sl_{n}}$-module $\Gamma_{\lambda}:$   
$$
\sum_{s \in W } (-1)^{|s|} n_{\lambda}\bigl(\mu +\rho-s(\rho)\bigr)=0.
$$
Substituting  $\mu=(0,\ldots0)$  and  taking into account that the  multiplicities of the weights $\rho-s(\rho)$  and $(\rho-s(\rho))^*$  coincides we obtain the formula.
\end{proof}

Now we  are  ready to calculate the value $\nu_{n,d}(k)$
\begin{te}
The number   $\nu_{n,d}(k)$ of linearly independed homogeneous invariants of  degree  $k$   $n$-ary form  of degree  $d$ is  calculated  by the  formula
$$
\nu_{n,d}(k))=\sum_{s \in W} (-1)^{|s|} c_{n,d}\bigl(k,(\rho-s(\rho))^*\bigr). \eqno{(*)}
$$
\end{te}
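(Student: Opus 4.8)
The plan is to combine the three lemmas in the natural way. Starting from the observation made in the text that $\nu_{n,d}(k)=\gamma_{n,d}(k,(0,0,\ldots,0))$ is the multiplicity of the trivial module in $S^k(A)$, I would first express this multiplicity in terms of weight multiplicities of $S^k(A)$. Writing $m_{n,d}(k,\mu)$ for the multiplicity of the weight $\mu$ in $S^k(A)$, the standard relation between the decomposition into irreducibles and weight multiplicities gives $m_{n,d}(k,\mu)=\sum_{\lambda}\gamma_{n,d}(k,\lambda)\,n_{\lambda}(\mu)$. The idea is then to apply the alternating sum $\sum_{s\in W}(-1)^{|s|}(\cdot)$ evaluated at the weights $(\rho-s(\rho))^*$: by Lemma~3, $\sum_{s\in W}(-1)^{|s|} n_{\lambda}\bigl((\rho-s(\rho))^*\bigr)=E_{\lambda}$ equals $1$ when $\lambda=(0,\ldots,0)$ and $0$ otherwise, so
$$
\sum_{s\in W}(-1)^{|s|} m_{n,d}\bigl(k,(\rho-s(\rho))^*\bigr)=\sum_{\lambda}\gamma_{n,d}(k,\lambda)\,E_{\lambda}=\gamma_{n,d}(k,(0,\ldots,0))=\nu_{n,d}(k).
$$

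Next I would identify $m_{n,d}(k,\mu)$ with $c_{n,d}(k,\mu)$. By Lemma~2, $\mathrm{Char}(S^k(A))=\sum_{\mu} c_{n,d}(k,\mu)\,e(\mu)$, and since by definition the character is $\sum_{\mu} m_{n,d}(k,\mu)\,e(\mu)$, comparing coefficients of $e(\mu)$ in the group ring $\mathbb{Z}(\Lambda_A)$ gives $m_{n,d}(k,\mu)=c_{n,d}(k,\mu)$ for every weight $\mu$. Substituting this into the displayed identity above yields exactly
$$
\nu_{n,d}(k)=\sum_{s\in W}(-1)^{|s|} c_{n,d}\bigl(k,(\rho-s(\rho))^*\bigr),
$$
which is $(*)$. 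One small bookkeeping point to address: the system defining $c_{n,d}(k,\mu)$ in Lemma~2 uses the first equation $2\omega_1(\alpha)+\cdots=kd-\mu_1$, whereas Lemma~1 produces the first weight coordinate $nd-(2\omega_1(\alpha)+\cdots)$; I would check that these are consistent, i.e. that the relevant shift is $kd$ rather than $nd$ (this is presumably a typo in Lemma~1, since the total degree is $k$, not $n$), so that the weight vectors of $S^k(A)$ are correctly enumerated by the system in Lemma~2.

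The main obstacle, such as it is, is not a deep one: it is making sure that the exchange of summation orders is legitimate and that the terms $n_{\lambda}\bigl((\rho-s(\rho))^*\bigr)$ are handled correctly when $(\rho-s(\rho))^*$ is not a weight of $\Gamma_{\lambda}$ — but the text already stipulates $n_{\lambda}(\mu)=0$ in that case, so the sums are all finite and the rearrangement is valid. The only genuine content is Lemma~3, which has already been granted. Thus the proof is essentially a two-line assembly: pass from $\gamma$ to weight multiplicities via the character decomposition, apply the orthogonality-type relation of Lemma~3, and then rewrite weight multiplicities of $S^k(A)$ as the solution counts $c_{n,d}$ from Lemma~2.
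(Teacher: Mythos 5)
Your proposal is correct and follows essentially the same route as the paper: identify $c_{n,d}(k,\mu)$ with the weight multiplicities of $S^k(A)$ via the character comparison of Lemma~2, use the standard relation between weight multiplicities and the coefficients $\gamma_{n,d}(k,\lambda)$, and then apply the alternating sum of Lemma~3 to isolate $\gamma_{n,d}(k,(0,\ldots,0))=\nu_{n,d}(k)$; the only difference is the order in which you combine these identities, which is immaterial. Your remark that the $n\,d$ in Lemma~1 should read $k\,d$ is a correct observation about a typo and does not affect the argument.
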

\begin{proof}
The  number  $\nu_{n,d}(k))$  is equal to the multiplicities  $\gamma_{n,d}(k,(0,0,\ldots,0))$ of trivial representation  $\Gamma_{(0,0,\ldots,0)}$  in the symmetrical power  $S^k(A).$  The decomposition 

$$
S^k(A)\cong \sum_{\lambda} \gamma_{n,d}(k,\lambda) \Gamma_{\lambda}, \lambda \in \Lambda^{+}_{S^k(A)},
$$
 implies  the folowing characters decompositions
$$
{\rm Char}(S^k(A)) = \sum_{\lambda \in \Lambda^{+}_{S^k(A)}} \gamma_{n,d}(k,\lambda) {\rm Char}\bigl(\Gamma_{\lambda} \bigr).
$$
Taking into  account 
$$
{\rm Char}\bigl(\Gamma_{\lambda} \bigr)=\sum_{\mu \in \Lambda_{\Gamma_{\lambda}}} n_{\lambda}(\mu) e(\mu), 
$$
we get
$$
{\rm Char}(S^k(A))= \sum_{\lambda \in \Lambda^{+}_{S^k(A)}} \gamma_{n,d}(k,\lambda) \sum_{\mu \in \Lambda_{\Gamma_{\lambda}}} n_{\lambda}(\mu) e(\mu)= \sum_{\mu \in \Lambda_{S^k(A)} }\Bigl( \sum_{\lambda \in  \Lambda^{+}_{S^k(A)}} \gamma_{n,d}(k,\lambda) n_{\lambda}(\mu) \Bigr)e(\mu).
$$
By using Lemma 2  we  get 
$$
{\rm Char} (S^k(A))=\sum_{\mu \in \Lambda_{S^k(A)} } c_{n,d}(k,\mu) e(\mu).
$$
Therefore
$$
\sum_{\mu \in \Lambda_{S^k(A)} } c_{n,d}(k,\mu) e(\mu)= \sum_{\mu \in \Lambda_{S^k(A)} }\Bigl( \sum_{\lambda \in  \Lambda^{+}_{S^k(A)}} \gamma_{n,d}(k,\lambda) n_{\lambda}(\mu) \Bigr)e(\mu).
$$
By equating  the coefficients of  $e(\mu),$  we  obtain 
$$
 c_{n,d}(k,\mu) = \sum_{\lambda \in  \Lambda^{+}_{S^k(A)}} \gamma_{n,d}(k,\lambda) n_{\lambda}(\mu).
$$
Then, by using previous lemma  we have 
$$
\sum_{s \in W} (-1)^{|s|} c_{n,d}\bigl(k,(\rho-s(\rho))^*\bigr)=\sum_{s \in W} (-1)^{|s|}\sum_{\lambda \in  \Lambda^{+}_{S^k(A)}} \gamma_{n,d}(k,\lambda) n_{\lambda}((\rho-s(\rho))^*)=
$$
$$
=\sum_{\lambda \in  \Lambda^{+}_{S^k(A)}} \gamma_{n,d}(k,\lambda) \Bigl( \sum_{s \in W} (-1)^{|s|}n_{\lambda}((\rho-s(\rho))^*)\Bigr)=\sum_{\lambda \in  \Lambda^{+}_{S^k(A)}} \gamma_{n,d}(k,\lambda) E_{\lambda}=
\gamma_{n,d}(k,(0,0,\ldots,0)).
$$
Thus, 
$$
\nu_{n,d}(k)=\sum_{s \in W} (-1)^{|s|} c_{n,d}\bigl(k,(\rho-s(\rho))^*\bigr).
$$
This concludes the proof.
\end{proof}

\noindent
{\bf 3.}
It is easy to see that the multiplicities  $\gamma_{n,d}(k,\lambda)$  is  equal to number of linearly independed highest vectors of   $\mathfrak{sl_{n}}$-module $S^k(A)$ with    highest weight   $\lambda.$  Any highest vector  is the invariant of the subalgebra of upper triangular unipotent matrices of the Lie  algebra  $\mathfrak{sl_{n}}$.  Such elements is called semi-invariants of $n$-ary form.   In the same way we can  prove the theorem 
\begin{te} 
$$
\gamma_{n,d}(k,\lambda)=\sum_{s \in W} (-1)^{|s|} c_{n,d}\bigl(k,(\lambda+\rho-s(\rho))^*\bigr), \lambda \in  \Lambda^{+}_{S^k(A)}.
$$
\end{te}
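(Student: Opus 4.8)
The plan is to run the argument of the proof of Theorem 1 essentially word for word, the only extra input being a generalization of Lemma 3. For an ordered pair of dominant weights $\lambda,\mu$ of $\mathfrak{sl_n}$ I would put
$$
E_\lambda^{(\mu)}:=\sum_{s\in W}(-1)^{|s|}\,n_\lambda\bigl((\mu+\rho-s(\rho))^*\bigr),
$$
so that Lemma 3 is the case $\mu=(0,\ldots,0)$, and I would prove the analogue: $E_\lambda^{(\mu)}=1$ if $\lambda=\mu$ and $E_\lambda^{(\mu)}=0$ otherwise. Granting this, the theorem is immediate: the proof of Theorem 1 already produced, by equating the coefficient of $e(\mu)$ in the two expressions for ${\rm Char}(S^k(A))$, the identity
$$
c_{n,d}(k,\mu)=\sum_{\sigma\in\Lambda^{+}_{S^k(A)}}\gamma_{n,d}(k,\sigma)\,n_\sigma(\mu)
$$
valid for every weight $\mu$. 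I would apply it with $\mu=(\lambda+\rho-s(\rho))^*$, multiply by $(-1)^{|s|}$, sum over $s\in W$, interchange the two summations, and recognize the inner sum as $E_\sigma^{(\lambda)}$; by the generalized lemma only the term $\sigma=\lambda$ survives, leaving $\gamma_{n,d}(k,\lambda)$, which is the assertion.

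For the generalized lemma I would split into the same two cases as in Lemma 3. If $\lambda\neq\mu$, I would use the recurrence formula of \cite{NS} — which for a dominant weight $\mu\neq\lambda$ reads $\sum_{s\in W}(-1)^{|s|}n_\lambda(\mu+\rho-s(\rho))=0$, the right-hand side vanishing because $\mu+\rho$ and $\lambda+\rho$ are distinct regular dominant weights and hence not $W$-conjugate — and then replace each argument by its dominant $W$-representative, the weight multiplicities being $W$-invariant; this gives $E_\lambda^{(\mu)}=0$. If $\lambda=\mu$, I would argue directly: the term $s=e$ contributes $n_\mu(\mu^*)=n_\mu(\mu)=1$, while for $s\neq e$ the weight $\mu+\rho-s(\rho)$ is obtained from $\mu$ by adding the nonzero sum of positive roots $\rho-s(\rho)$, so it is not a weight of $\Gamma_\mu$ (as $\mu$ is the highest weight), and neither is its dominant representative; hence $E_\mu^{(\mu)}=1$.

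I expect the one genuinely delicate point to be the case $\lambda=\mu$ of the generalized lemma, i.e.\ the assertion that for $s\neq e$ the weight $\mu+\rho-s(\rho)$, and therefore $(\mu+\rho-s(\rho))^*$, lies outside $\Gamma_\mu$; this rests on the standard fact that $\rho-s(\rho)=\sum_{\alpha>0,\;s^{-1}(\alpha)<0}\alpha$ is a nonempty sum of positive roots whenever $s\neq e$. The rest is a purely formal transcription of the bookkeeping in the proof of Theorem 1, with the harmless convention that $c_{n,d}(k,\cdot)$, $n_\sigma(\cdot)$ and $\gamma_{n,d}(k,\cdot)$ are extended by $0$ to weights, resp.\ dominant weights, not occurring in the relevant module — which makes all the sums above finite and legitimate.
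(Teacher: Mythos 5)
Your proposal is correct and follows exactly the route the paper intends: the paper gives no separate argument for Theorem 2 beyond the remark that it is proved ``in the same way'' as Theorem 1, and your generalized lemma $E_\lambda^{(\mu)}=\delta_{\lambda\mu}$ (proved via the same Naimark--Stern recurrence plus $W$-invariance of multiplicities, with the $\lambda=\mu$ case handled by the highest-weight argument) is precisely the missing ingredient that this remark presupposes. The formal bookkeeping you describe is a faithful transcription of the proof of Theorem 1, so there is nothing to correct.
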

Let us introduce the conception of covariants for  $n$-ary form.  Denote by  $C_{n,d}$  the  algebra of polynomial functions   of  the following  $\mathfrak{sl_{n}}$-module 
$$
A_{n,d}\oplus \Gamma_{(1,0,\ldots,0)} \oplus \Gamma_{(0,1,\ldots,0)} \cdots \oplus \Gamma_{(0,0,\ldots,1)}. 
$$
The subalgebra  $C_{n,d}^{\mathfrak{sl_{n}}}$ of  $\mathfrak{sl_{n}}$-invariants  is called the algebra of covarians of $n$-ary form.
For  $n=2$ the classical Roberts' theorem \cite{Rob} states  that the algebras of   semi-invariants and covariants are isomorphic.  The similar result for  ternary   form proved  by the  author in  the paper  \cite{AA}. 

The following statement seems to hold

\noindent
{\bf Conjecture.} {\it The algebras of semi-invariants and covariants  of $n$-ary form are isomorphic.}

If it is true,   then Theorem 2 defines the formula for calculation of the number linearly independed covariants for $n$-ary form of the weight $\lambda$ and degree  $k.$

\noindent
{\bf 4.}  {\bf Example.} Let  $n=3.$  Then  half the sum of the positive roots $\rho$ is equal to  $(1,1).$ The Weyl group of Lie  algebra  $\mathfrak{sl_{3}}$ is  generated by the  three reflections  $s_{\alpha_1},$ $s_{\alpha_2},$ $s_{\alpha_3},$ here $\alpha_1=(2,-1),$ ${\alpha_2=(-1,2)}$ i ${\alpha_3=(1,1)}$ are all positive  roots. The orbit  $W(\rho)$ consists of  $6$ weights  -- (1,1) and  
$$
\begin{array}{ll}
 s_{\alpha_1}(1,1)=(-1,2), & (-1)^{|s_{\alpha_1}|}=-1,\\
 s_{\alpha_2}(1,1)=(2,-1), &  (-1)^{| s_{\alpha_2}|}=-1,\\
 s_{\alpha_3}(1,1)=(-1,-1), &  (-1)^{| s_{\alpha_3}|}=-1,\\
s_{\alpha_1} s_{\alpha_3}(1,1)=(1,-2), &  (-1)^{|s_{\alpha_1} s_{\alpha_3}|}=1,\\
s_{\alpha_3} s_{\alpha_1}(1,1)=(-2,1), &  (-1)^{|s_{\alpha_3} s_{\alpha_1}|}=1,\\
\end{array}
$$
Therefore
$$
\rho-W(\rho)=\{ (0,0), (2,-1),(-1,2), (2,2), (0,3), (3,0)\}.
$$
Thus, taking into account the signs of the correponsing elements of the group W we  get the following  indentity for any dominant weight $\lambda$ of  the standard irreducible     $\mathfrak{sl_{3}}$-module $\Gamma_{\lambda}:$
$$
 n_{\lambda}(0,0) - n_{\lambda}(2,-1)- n_{\lambda}(-1,2) - n_{\lambda} (2,2)+ n_{\lambda}(0,3)+ n_{\lambda} (3,0)=0.
$$
Since the weights  $(1,1), (2,-1), (-1,2)$   lies on the same orbit   it is  implies  $(2,-1)^*=(-1,2)^*=(1,1)$ and  $$n_{\lambda}(1,1)=n_{\lambda}(2,-1)=n_{\lambda}(-1,2).$$
Thus
$$
 n_{\lambda}(0,0)- 2\,n_{\lambda}(1,1) - n_{\lambda} (2,2)+ n_{\lambda}(0,3)+ n_{\lambda} (3,0)=0.
$$
Therefore,  using Theorem  1,  we  obtain
$$
\nu_{3,d}(k)= c_{3,d}(k,(0,0))- 2\,c_{3,d}(k,(1,1)) - c_{3,d}(k, (2,2))+c_{3,d}(k,(0,3))+ c_{3,d}(k, (3,0)).
$$
It  coincides completelly with result of the paper  \cite{ASM}.

\noindent
{\bf 5.}  Let us derive the formula  for calculation of  $\nu_{n,d}(k).$  Solving the system of equations 
$$
\left \{
\begin{array}{l}
2\, \omega_1(\alpha)+\omega_2(\alpha)+\cdots +\omega_{n-1}(\alpha)=d\,n-\mu_1, \\
\omega_1(\alpha)-\omega_2(\alpha)=\mu_2, \\
\ldots \\
\omega_{n-2}(\alpha)-\omega_{n-1}(\alpha)=\mu_{n-1},\\
|\alpha|=k.
\end{array}
\right.
$$
for  $\omega_1(\alpha), \omega_2(\alpha), \ldots, \omega_{n-1}(\alpha) $  we  get  
$$
\left \{
\begin{array}{l}
\displaystyle \omega_1(\alpha)= \frac{k\,d}{n}-\Bigl(\frac{1}{n}\,(\mu_1+2\,\mu_2+\cdots+(n-1)\, \mu_{n-1})-(\mu_2+\mu_3+\cdots +\mu_{n-1})\Bigr)\\
\displaystyle \omega_2(\alpha)= \frac{k\,d}{n}-\Bigl(\frac{1}{n}\,(\mu_1+2\,\mu_2+\cdots+(n-1)\, \mu_{n-1})-(\mu_3+\mu_4+\cdots +\mu_{n-1})\Bigr) \\
\ldots \\
\displaystyle \omega_s(\alpha)= \frac{k\,d}{n}-\Bigl(\frac{1}{n}\,(\mu_1+2\,\mu_2+\cdots+(n-1)\, \mu_{n-1})-(\mu_{s+1}+\mu_{s+2}+\cdots +\mu_{n-1})\Bigr)\\
\ldots\\
\displaystyle \omega_{n-1}(\alpha)= \frac{k\,d}{n}-\Bigl(\frac{1}{n}\,(\mu_1+2\,\mu_2+\cdots+(n-1)\, \mu_{n-1})\Bigr)\\
|\alpha|=k
\end{array}
\right.
$$
It is not hard to prove that  the number  $c_{n,d}(k,(0,0,\ldots,0))$ of   nonnegative integer solutions of the following system 
$$
\left \{
\begin{array}{l}
\displaystyle \omega_1(\alpha)= \frac{k\,d}{n}\\
\displaystyle \omega_2(\alpha)= \frac{k\,d}{n} \\
\ldots \\
\displaystyle \omega_{n-1}(\alpha)= \frac{k\,d}{n}\\
|\alpha|=k
\end{array}
\right.
$$
is equal to coefficient of  $\displaystyle t^k (q_1 q_2 \ldots q_{n-1})^{\frac{k\,d}{n}}$    of the  expansion   of the  series

$$
R_{n,d}=\Bigl(\prod_{|\mu| \leq  d } (1-t q_1^{\mu_1} q_2^{\mu_2} \cdots q_{n-1}^{\mu_{n-1}})\Bigr)^{-1}.
$$
Denote it in such way:
$$c_{n,d}(k,(0,0,\ldots,0))=\Bigl(R_{n,d} \Bigr)_{t^k (q_1q_2 \cdots q_{n-1} )^{\frac{k\,d}{n}}}.$$
Then, for a set of integer numbers  $(\mu_1,\mu_2,\ldots,\mu_{n-1})$ the number  $c_{n,d}(k,(i_1,i_2,\ldots,i_{n-1}))$ of integer nonnegative solutions of the system of equations
 $$
\left \{
\begin{array}{l}
\displaystyle \omega_1(\alpha)= \frac{k\,d}{n}-\mu_1\\
\displaystyle \omega_2(\alpha)= \frac{k\,d}{n} -\mu_2\\
\ldots \\
\displaystyle \omega_{n-1}(\alpha)= \frac{k\,d}{n}-\mu_{n-1}\\
|\alpha|=k
\end{array}
\right.
$$
is  equals
$$c_{n,d}(k,(\mu_1,\mu_2,\ldots,\mu_{n-1}))=\Bigl( q_1^{\mu_1} q_2^{\mu_2}\cdots  q_{n-1}^{\mu_n} R_{n,d} \Bigr)_{t^k (q_1q_2 \cdots q_{n-1} )^{\frac{k\,d}{n}}}.$$

By using the multi-index notation rewrite the last expression in the  form 
$$
c_{n,d}(k,\mu)=\Bigl( q^{\mu}  R_{n,d} \Bigr)_{t^k (q )^{\frac{k\,d}{n}}}.
$$
To each  $\mu \in I_{n,d}$  assing  the following  vector 
$$
\mu_{\omega}=\left(\frac{1}{n}\,\Bigl(\sum_{s=1}^{n-1} s \mu_s\Bigr)-\sum_{s=2}^{n-1} \mu_s,\frac{1}{n}\,\Bigl(\sum_{s=1}^{n-1} s \mu_s\Bigr)-\sum_{s=3}^{n-1} \mu_s ,\,\dots, \frac{1}{n} \sum_{s=1}^{n-1} s \mu_s \right).
$$
Then the following formula holds
$$
\nu_{n,d}(k)=\left(  \Bigl( \sum_{s \in W} (-1)^{|s|} c_{n,d}\bigl(k,(\rho-s(\rho))^*\bigr) q^{(\rho-s(\rho))^*_{\omega}}  \Bigr)  R_{n,d}  \right)_{t^k (q )^{\frac{k\,d}{n}}}.
$$

\end{document}